\newcommand{\F}{\mathbb{F}}
\newtheorem{thm}{Theorem}[section]
\newtheorem{corollary}[thm]{Corollary}
\newtheorem{prop}[thm]{Proposition}
\newtheorem*{thm*}{Theorem}
\theoremstyle{definition}
\newtheorem{definition}[thm]{Definition}
\newtheorem{remark}{Remark}[section]
\newtheorem{ex}{Example}[thm]
\DeclareMathOperator{\ad}{ad}
\DeclareMathOperator{\Imm}{Im}
\DeclareMathOperator{\Zz}{Z}
\DeclareMathOperator{\Leib}{Leib}
\DeclareMathOperator{\spn}{span}
\title{Biderivations of complete Leibniz algebras}
\author[1]{Alfonso Di Bartolo\footnote{{\href{mailto:alfonso.dibartolo@unipa.it}{\texttt{alfonso.dibartolo@unipa.it}}}}}
\author[2]{Francesco Paolo Di Fatta\footnote{\href{mailto:francesco.difatta@studenti.unime.it}{\texttt{francesco.difatta@studenti.unime.it}}}}
\author[1]{Gianmarco La Rosa\footnote{{\href{mailto:gianmarco.larosa@unipa.it}{\texttt{gianmarco.larosa@unipa.it}}}}\footnote{Corresponding Author}}
\affil[1]{Dipartimento di Matematica e Informatica, \protect\\ Università degli Studi di Palermo,\protect\\ Via Archirafi 34, 90123 Palermo, Italy}
\affil[2]{Doctoral School on Mathematics and Computational Sciences,\protect\\
University of Messina, Catania and Palermo}
\date{}
\definecolor{math}{rgb}{0.0, 0.8, 0.6}
\begin{document}
\maketitle

\begin{abstract}
	If one wishes to define a complete Leibniz algebra in such a way as to extend the notion of a complete Lie algebra, two distinct definitions can be found in the current literature.  
	Since biderivations on complete Lie algebras have already been studied, in order to extend those results and considering that Leibniz algebras are, among others, a natural generalisation of Lie algebras, we study here the biderivations of complete Leibniz algebras according to both definitions.  
	In each case, we provide necessary and sufficient conditions for a bilinear map to be a biderivation of a Leibniz algebra.  
	Finally, we analyse both symmetric and skew-symmetric biderivations, highlighting their structural properties.
\end{abstract}

\bigskip
\noindent\textbf{Keywords:} Leibniz algebras; biderivations; complete algebras; derivations; Lie theory.

\medskip
\noindent\textbf{MSC 2020:} 17A32, 17B05, 17B30, 17B40.

\section{Introduction}\label{sec:introduction}

A \emph{left Leibniz algebra} $L$ is a vector space over a field $\mathbb{F}$ equipped with a $\mathbb{F}$-bilinear form \([-,-] \colon L \times L \to L\) satisfying the Leibniz rule, i.\ e.\, \begin{equation}\label{eq:left_diff_id}
[x,[y,z]]=[y,[x,z]]+[[x,y],z],
\end{equation}
for all \(x,y,z\in L\). Similary, $L$ is called \emph{right Leibniz algebra} if the bilinear form $[-,-]$ satisfies the Leibniz rule
\begin{equation}\label{eq:right_diff_id}
	[[x,y],z]]=[[x,z],y]+[x,[y,z]].
		\end{equation}
		All results concerning left Leibniz algebras can be directly translated into corresponding results for right Leibniz algebras (including definitions) by introducing a new product $\{-,-\}$ on the same vector space, defined by $
		\{x,y\} = [y,x].$ For this reason, and in order not to overburden the present exposition, from now on we shall restrict our attention to left Leibniz algebras, simply writing $L$ to denote a Leibniz algebra.
		Clearly, every Lie algebra is a particular case of a Leibniz algebra, where the Lie bracket is alternating. In this setting, the Jacobi identity follows directly from either \Cref{eq:left_diff_id} or \Cref{eq:right_diff_id}, together with the alternating assumption. 
		A \emph{derivation} of a Leibniz algebra \( L \) is a linear map \( D \colon L \to L \) that satisfies the Leibniz rule
		$D([x, y]) = [D(x), y] + [x, D(y)]$, for all $x, y \in L$.
		Derivations abstract the behaviour of differentiation operators acting on algebras of smooth or continuous functions over manifolds, capturing the notion of infinitesimal symmetries within the algebraic framework. This is particularly the case for Lie algebras, where derivations play a central role in understanding their structure and representations.  A fundamental example of a derivation in the context of Lie algebras is the \emph{adjoint map}. For any \( x \in L \), the adjoint map \( \ad_x \colon L \to L \) is defined by \( \ad_x(y) = [x, y] \) for all \( y \in L \). Derivations of \( L \) of the form \( \ad_x \), for some \( x \in L \), are called \emph{inner derivations}. Nevertheless, derivations of Leibniz algebras also play a fundamental role in the study of such algebras, providing key insights into their internal symmetries and structural properties.
		
		A class that plays a fundamental role in the theory of Lie algebras (and likewise in Leibniz algebras) is that of semisimple algebras. One of their generalisations is given by \emph{complete} Lie algebras, namely those algebras with trivial center and all derivations being inner. In contrast to the case of Lie algebras, in the literature there exist two definitions of complete Leibniz algebras: one introduced in 2020 in \cite{BOYLE2020172} by Boyle, Misra, and Stitzinger, and another in 2022 in \cite{Ayupov2022} by Ayupov, Khudoyberdiyev, and Shermatova. Both definitions extend the classical notion of completeness for Lie algebras, in the sense that when restricted to Lie algebras, they coincide with the usual definition.  As we shall see in more detail later, there are Leibniz algebras that are complete according to the first definition but not the second, and vice versa. 
		
		A natural generalisation of the notion of a derivation is that of a biderivation, a two-dimensional analogue of a derivation in the sense that it acts as a bilinear map in each argument. This concept originated in 1980 when Maksa in \cite{maksa1980remark} studied symmetric biadditive maps with nonnegative diagonalisation on Hilbert spaces. Since then, biderivations have been investigated in various algebraic contexts, including rings (\cite{BRESAR1995764,GHOSSEIRI2013250}), algebras (\cite{BENKOVIC20091587}), Lie algebras (\cite{Bresar2018,Wu2023,tang2018biderivations}) and Lie superalgebras (\cite{Fan2017,Yuan2018}). The contexts mentioned above represent only a few of the settings in which biderivations have been studied. Furthermore, the cited works are not intended to be an exhaustive list of the literature on the subject, but rather serve as illustrative examples.

		In this paper, after recalling some preliminary results on Leibniz algebras and biderivations (\Cref{sec:preliminaries}), we investigate biderivations of complete Leibniz algebras under both definitions, in order to compare these results with those obtained for complete Lie algebras in \cite{DIBARTOLOLAROSA_COMPLETE_BIDER} (\Cref{sec:biderivations_complete_Leib}). Therefore, what we obtain is an extension of the results in \cite{DIBARTOLOLAROSA_COMPLETE_BIDER}. Finally, in \Cref{sec:Symm_skresymm_bider} we analyse symmetric and skew-symmetric biderivations in terms of linear commuting and skew-commuting maps.
		
		\section{Preliminaries}\label{sec:preliminaries}
		
		In this section, we briefly recall some fundamental notions concerning both Leibniz algebras and biderivations, which will be used throughout the paper.  
		For a more detailed and comprehensive treatment of Leibniz algebras, we refer the reader to a standard reference on the topic, such as \cite{ayupov2019}.  
		As mentioned above, we recall that, unless otherwise specified, \(L\) denotes a left Leibniz algebra over a field of characteristic zero.
		  
		We begin by recalling the following remark.
		
		\begin{remark} \label{remark:antisimm_1_componente}
			We observe that \([[x,y],z]=-[[y,x],z]\), for all $x,y,z\in L$. Indeed
			\begin{align*}
				[[x,y],z] & =[x,[y,z]]-[y,[x,z]]    \\
				          & =-([y,[x,z]]-[x,[y,z]]) \\
				          & =-[[y,x],z].          
			\end{align*}
		\end{remark}
		
		The \emph{Leibniz kernel} of a Leibniz algebra $L$ is the space \(\Leib(L)=\spn\langle[x,x]\mid x \in L \rangle\). We recall here that $\Leib(L)$ is more than a subspace of $L$, indeed, is an ideal.
		
		\begin{prop}\label{prop:leib_is_ideal}
			\(Leib(L)\) is an ideal of \(L\).
		\end{prop}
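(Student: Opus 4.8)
The plan is to verify directly that $\Leib(L)$ absorbs brackets on both sides, i.e.\ that $[z,w]\in\Leib(L)$ and $[w,z]\in\Leib(L)$ for all $z\in L$ and $w\in\Leib(L)$. Since $\Leib(L)$ is by definition a subspace, and both $[z,-]$ and $[-,z]$ are linear for a fixed $z$, it suffices to check these two containments on the spanning elements $w=[x,x]$, $x\in L$, and then extend by linearity.

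The right-hand absorption is immediate from \Cref{remark:antisimm_1_componente}: specialising $y=x$ there yields $[[x,x],z]=-[[x,x],z]$, hence $2[[x,x],z]=0$, and since $\car\F=0$ we get $[[x,x],z]=0\in\Leib(L)$. (In fact this already shows the stronger statement $[\Leib(L),L]=0$.) For the left-hand absorption I would first record the polarisation identity $[a,b]+[b,a]=[a+b,a+b]-[a,a]-[b,b]\in\Leib(L)$, valid for all $a,b\in L$ precisely because $\Leib(L)$ is a subspace. Then applying the left Leibniz rule \Cref{eq:left_diff_id} to the triple $(z,x,x)$ gives $[z,[x,x]]=[x,[z,x]]+[[z,x],x]$, and the right-hand side is exactly $[x,u]+[u,x]$ with $u=[z,x]$, hence lies in $\Leib(L)$ by the polarisation identity. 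Extending by linearity in the $[x,x]$-variable yields $[L,\Leib(L)]\subseteq\Leib(L)$, and together with the previous paragraph this proves $\Leib(L)$ is an ideal.

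The only mildly delicate point is recognising, after invoking the Leibniz identity, that the sum $[x,[z,x]]+[[z,x],x]$ has the shape $[a,u]+[u,a]$ to which polarisation applies; the rest is routine bookkeeping with bilinearity and the subspace property of $\Leib(L)$.
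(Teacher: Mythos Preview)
Your argument is correct and follows essentially the same route as the paper's proof: both use \Cref{remark:antisimm_1_componente} to get $[[x,x],z]=0$, and both compute $[z,[x,x]]=[x,[z,x]]+[[z,x],x]$ via the left Leibniz identity and recognise the right-hand side as an element of $\Leib(L)$ through the polarisation $[a,b]+[b,a]\in\Leib(L)$. Your write-up is simply more explicit about that last step, which the paper leaves to the reader (indeed, the paper records the polarisation identity only afterwards as \Cref{remark:xy+yx}).
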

		
		\begin{proof}
			\(Leib(L)\) is a vector space for construction. Furthermore, for every \(x,y \in L\), \([y,[x,x]]=[x,[y,x]]+[[y,x],x] \in \Leib(L)\), so \([L,\Leib(L)] \subseteq \Leib(L)\). Lastly, 
			\([[x,x],y]=-[[x,x],y]\) by \Cref{remark:antisimm_1_componente}, so \([[x,x],y]=0 \in \Leib(L)\), i.e \([\Leib(L),L] \subseteq \Leib(L)\).
		\end{proof}
		
		\begin{remark} \label{remark:xy+yx}
			We observe that \([x,y]+[y,x]=[x+y,x+y]-[x,x]-[y,y] \in Leib(L)\) for all \(x,y\) in \(L\).
		\end{remark}
		
		With these results, we can consider \(L/\Leib(L)\), which is manifestly a Lie algebra. Moreover, $\Leib(L)$ is the smallest ideal of $L$ such that \(L/\Leib(L)\) is a Lie algebra (for further details see \cite{ayupov2019}).
		The \emph{left center} of $L$ is defined as \(\Zz^l(L)= \big\{ x \in L\mid[x,y]=0, \, \forall \, y \in L\big\}\). The \emph{center} of $L$ is defined as \(\Zz(L)= \big\{ x \in L\mid [x,y]=[y,x]=0 ,\, \forall \, y \in L\big\}\).

		\begin{prop}
			\(\Zz^l(L)\) is an ideal of \(L\).
		\end{prop}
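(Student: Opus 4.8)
The plan is to show that $\Zz^l(L)$ is closed under the bracket from both sides, using the left Leibniz identity~\eqref{eq:left_diff_id} together with \Cref{remark:antisimm_1_componente}. Recall $\Zz^l(L) = \{x \in L \mid [x,y] = 0 \text{ for all } y \in L\}$; it is clearly a subspace, so the only work is to verify $[L, \Zz^l(L)] \subseteq \Zz^l(L)$ and $[\Zz^l(L), L] \subseteq \Zz^l(L)$.

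First I would handle the right-sided inclusion: if $x \in \Zz^l(L)$ and $a \in L$, I want $[a,x] \in \Zz^l(L)$, i.e.\ $[[a,x],z] = 0$ for all $z \in L$. Applying \eqref{eq:left_diff_id} gives $[a,[x,z]] = [x,[a,z]] + [[a,x],z]$. But $[x,[a,z]] = 0$ since $x \in \Zz^l(L)$ (applied to the element $[a,z]$), and $[a,[x,z]] = [a,0] = 0$ since $[x,z] = 0$. Hence $[[a,x],z] = 0$, so $[a,x] \in \Zz^l(L)$, giving $[L,\Zz^l(L)] \subseteq \Zz^l(L)$.

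Next I would handle the left-sided inclusion: if $x \in \Zz^l(L)$ and $a \in L$, I want $[x,a] \in \Zz^l(L)$, i.e.\ $[[x,a],z] = 0$ for all $z$. By \Cref{remark:antisimm_1_componente}, $[[x,a],z] = -[[a,x],z]$, and we have just shown $[[a,x],z] = 0$; hence $[[x,a],z] = 0$ and $[x,a] \in \Zz^l(L)$, so $[\Zz^l(L),L] \subseteq \Zz^l(L)$. Combining the two inclusions, $\Zz^l(L)$ is a two-sided ideal.

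There is no real obstacle here — the argument is a short direct computation — but the one point to be careful about is which variable plays which role when invoking $x \in \Zz^l(L)$: the defining property $[x,-]=0$ must be applied to the compound element $[a,z]$, not just to generators, and one must note $[x,z]=0$ before concluding $[a,[x,z]]=0$. I would also remark that the same argument shows $\Zz(L)$ is an ideal, since $\Zz(L) = \Zz^l(L) \cap \Zz^r(L)$ and an analogous computation (or \Cref{remark:xy+yx}) handles the right center, but that is not needed for the statement as phrased.
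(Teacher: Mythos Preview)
Your argument is correct. Note, however, that the paper states this proposition without proof, so there is no route of the paper's to compare against; your direct computation via the left Leibniz identity and \Cref{remark:antisimm_1_componente} is the natural one.

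One small simplification is available in your second inclusion. You argue that $[[x,a],z] = -[[a,x],z] = 0$ via \Cref{remark:antisimm_1_componente}, but this is more than needed: by the very definition of $\Zz^l(L)$, any $x \in \Zz^l(L)$ satisfies $[x,a] = 0$ for every $a \in L$, so $[\Zz^l(L), L] = 0$ outright, and $0 \in \Zz^l(L)$ trivially. Thus only the inclusion $[L, \Zz^l(L)] \subseteq \Zz^l(L)$ requires the Leibniz identity, exactly as you handled it. Your closing remark about $\Zz(L)$ is tangential and can be omitted.
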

		
		\begin{remark} \label{Leib in centre}
			By the proof of \Cref{prop:leib_is_ideal}, we observe that \([Leib(L),L]=0\), so \(\Leib(L) \subseteq \Zz^l(L)\).
		\end{remark}
		
		The following is the definition of derivation for a Leibniz algebra. The reader will not be surprised to discover that it follows closely from that for Lie algebras (which in turn follows from the one given for algebras, and so on).
		
		\begin{definition}
			A linear map \(D \colon L \to L\) is a \emph{derivation} of $L$ if 
			\begin{equation*}
				D([x,y])=[D(x),y]+[x,D(y)]
			\end{equation*}
			
			for all \(x,y\in L\).
		\end{definition}
		
		\begin{ex}
			Let \(L\) be a left Leibniz algebra, and we consider the maps of left multiplications on \(L\), namely \(L_x\), with \(x\in L\), where \(L_x(y)=[x,y]\). By \Cref{eq:left_diff_id}, \(L_x\) is a derivation of $L$. These kinds of derivations are called \emph{inner}.
		\end{ex}
		
		\begin{remark} \label{invarianza Leib}
			By \Cref{remark:xy+yx} we have \(D([x,x])=[D(x),x]+[x,D(x)] \in \Leib(L)\), so \(D(\Leib(L) \subseteq \Leib(L)\). In other words, $\Leib(L)$ is a \emph{characteristic ideal} of $L$.
		\end{remark}
		
		We are now ready to introduce the definition of biderivation, exactly as it is given for rings, algebras, Lie algebras, and so on.
		
		\begin{definition}\label{def:biderivation}
			Let \(f \colon L \times L \to L\) a \(\F-\)bilinear map. The map \(f\) is a \emph{biderivation} of $L$ if, for all \(x\in L\),  \(f(x,-)\) and \(f(-,x)\) are derivations of $L$, i.e
			\begin{align*}      
				f([x,y],z) & =[x,f(y,z)]+[f(x,z),y] \\
				f(x,[y,z]) & =[y,f(x,z)]+[f(x,y),z] 
			\end{align*}
			    
			\noindent for all \(x,y,z\in L\).
		\end{definition}
		
		\begin{remark}
			
			With this remark, we aim to clarify matters before the section dedicated to the results of our work. In the literature on Leibniz algebras, a definition of biderivation already exists. It was introduced by J.-L. Loday in 1993 in \cite{loday1993}: let $D$ be a derivation of a Leibniz algebra $L$, and let $d$ be an anti-derivation of $L$, that is, a linear map on $L$ such that $d([x,y]) = [x,d(y)] - [y,d(x)]$, for any $x,y\in L$. Then a biderivation for $L$, in Loday's sense, is the pair $(D,d)$ such that $[x, d(y)] = [x,D(y)]$ for all $x,y\in L$. In \cite{casas2025}, the following definition of biderivation (as a bilinear map) is given, namely a map $B\colon L\times L\to L$ such that 
			\begin{align}
				B([x,y],z) & =[x,B(y,z)]-[y,B(x,z)]\label{eq:Loday_bider_Leib_1} \\
				B(x,[y,z]) & =[B(x,y),z]+[y,B(x,z)]\label{eq:Loday_bider_Leib_2} 
			\end{align}
			for all $x,y,z\in L$. In this case, by setting $D(x)=B(y,x)$ and $d(x)=B(x,z)$, one can speak of a biderivation in Loday's sense. However, substantial differences arise between the two concepts:
			\begin{itemize}
				\item The definition does not capture the spirit in which biderivations originally arose (see the introduction and \cite{maksa1980remark}), namely as bilinear maps that behave as a derivation in each argument.
				\item As mentioned in the previous point, \Cref{eq:Loday_bider_Leib_1} does not tell us that $B$ is a derivation in the first argument when $L$ is a Leibniz algebra. If $L$ were a Lie algebra, then yes, but at that point the pair of derivation and anti-derivation simply becomes a pair of derivations since, if the bracket considered is anti-symmetric (as in a Lie algebra), the definitions of derivation and anti-derivation coincide.
			\end{itemize}
			This remark therefore aims to emphasize that throughout the rest of the paper, by biderivations we will mean bilinear maps as in \Cref{def:biderivation}, and not those that are usually understood for Leibniz algebras.
			
		\end{remark}
		
		In \cite{dibartoloLEFTRIGHTBIDER2025}, the authors introduced the notions of \emph{left} and \emph{right biderivations} in order to capture the bilateral behaviour of these maps and, moreover, to define a Lie bracket on the space of biderivations. We recall their definitions below, adapted to the case of a Leibniz algebra.
		
		\begin{definition}\label{def:left_right_biderivation}
			Let $L$ be a Leibniz algebra.  
			A map $B\colon L\times L\to L$ is called a:
			\begin{itemize}
				\item[a)] \emph{left biderivation} of $L$ if, for all $x,y,z\in L$, $B(\cdot,y)$ is linear, and
				      \begin{equation}\label{eq:leftbider}
				      	B(x,[y,z]) = [B(x,y),z] + [y,B(x,z)];
				      \end{equation}
				      
				\item[b)] \emph{right biderivation} of $L$ if, for all $x,y,z\in L$, $B(x,\cdot)$ is linear, and
				      \begin{equation}\label{eq:rightbider}
				      	B([x,y],z) = [x,B(y,z)] + [B(x,z),y].
				      \end{equation}
			\end{itemize}
		\end{definition}
		
		Clearly, a map is a biderivation if and only if it is both a left biderivation and a right biderivation.
		
		\subsection{Complete Leibniz algebras}
		
		In this section, we provide a brief review of the concept of complete Leibniz algebras. We begin by recalling the definition of a complete Lie algebra. 
		
		The notion of a \emph{complete} Lie algebra was introduced by N.\ Jacobson in 1962 \cite{jacobson2013lie}: a Lie algebra $L$ is complete if it has trivial center, $\operatorname{Z}(L)=0$, and every derivation is inner, $\operatorname{Der}(L)=\operatorname{ad}(L)$. Recall that semisimple Lie algebras satisfy both conditions, hence they provide natural examples of complete Lie algebras. However, completeness does not imply semisimplicity. In \cite{angelopoulos}, E. Angelopoulos exhibited a family of \emph{sympathetic} Lie algebras, i.e., complete Lie algebras satisfying $[L,L]=L$, which fail to be semisimple. Among these, there is a minimal-dimensional example: a $35$-dimensional Lie algebra whose Levi factor is isomorphic to $\mathfrak{sl}(2)$. For Leibniz algebras, however, the matter is slightly more subtle. Suppose one wishes to extend the definition of complete Lie algebras to the Leibniz setting. In that case, it is necessary to formulate axioms which, when specialised to Lie algebras, recover the classical conditions. Two such approaches appear in the literature.
		
		In 2020, Boyle, Misra, and Stitzinger gave the following definition of a complete Leibniz algebra.
		
		\begin{definition}[\cite{BOYLE2020172}]\label{def:complete1}
			A Leibniz algebra $L$ is said to be \emph{complete} if 
			\begin{itemize}
				\item[a)] \(\Zz(L/\Leib(L))=0\).
				\item[b)] For every derivation $D$ of $L$, there exists \(x \in L\) such that \(\Imm(D-L_x) \subseteq \Leib(L)\).
			\end{itemize}
		\end{definition}
		
		For such Leibniz algebras, hence we have \(\Zz^l(L)/\Leib(L) \subseteq \Zz(L/\Leib(L))\), so \(\Zz^l(L) \subseteq \Leib(L)\). According to \Cref{Leib in centre}, we have \(\Zz^l(L)=\Leib(L)\). Moreover, when $L$ is a Lie algebra, \Cref{def:complete1} reduces to the classical definition of completeness, since $\Leib(L)=0$.
		
		Two years later, Ayupov, Khudoyberdiyev, and Shermatova proposed an alternative definition of completeness for Leibniz algebras.
		
		\begin{definition} \label{def:complete2}
			A Leibniz algebra $L$ is said to be \emph{complete} if 
			\begin{itemize}
				\item[a)] \(\Zz(L)=0\).
				\item[b)] All derivations of $L$ are inner.
			\end{itemize}
		\end{definition}
		
		This definition, like the previous one, reduces to the classical definition of complete Lie algebras when $L$ is a Lie algebra, thereby fulfilling the initial goal. Surprisingly, however, the two definitions are not equivalent. Indeed, there exist Leibniz algebras that are complete in the sense of \Cref{def:complete1} but not in the sense of \Cref{def:complete2}, and conversely. 
		
		\begin{ex}\label{ex:completeLeib_si1no2}
			As reported by the authors in \cite{Ayupov2022}, the Leibniz algebra $L=\langle e_1,e_2,\ldots,e_n,x,y\rangle$ whose non-zero brackets are defined as
			\begin{align*}
				[e_1,e_1] & =e_3, & [e_i,e_1] & =e_{i+1},\quad 3\leq i\leq n-1 \\
				[e_1,x]   & =e_1, & [x,e_1]   & =-e_1,                         \\
				[e_2,y]   & =e_2, & [e_i,x]   & =(i-1)e_i,\quad 3\leq i\leq n  
			\end{align*}
			is solvable, complete in the sense of \Cref{def:complete1} but not in the sense of \Cref{def:complete2}.
		\end{ex}
		
		Conversely, we may consider a semisimple Leibniz algebra that is not a Lie algebra.  
		By Corollary~4.5 in \cite{feldvoss2024semisimpleleibnizalgebrasi}, we know that every finite-dimensional semisimple non-Lie Leibniz algebra over a field of characteristic zero admits derivations that are not inner.  
		Therefore, such a Leibniz algebra may be complete according to \Cref{def:complete1}, but it cannot be complete in the sense of \Cref{def:complete2}. 
		
		In order to make this situation visually clearer, we present in \Cref{fig:completeLeib} a diagram illustrating the relationships described above.  
		Specifically, let $\mathcal{X}_1$ be the set of complete Leibniz algebras as in \Cref{def:complete1}, $\mathcal{X}_2$ be the set of complete Leibniz algebras as in \Cref{def:complete2}, $\mathcal{X}$ be the set of complete Lie algebras, $L$ be the Leibniz algebra of \Cref{ex:completeLeib_si1no2}, and $S$ be a generic non-Lie semisimple Leibniz algebra.
		
		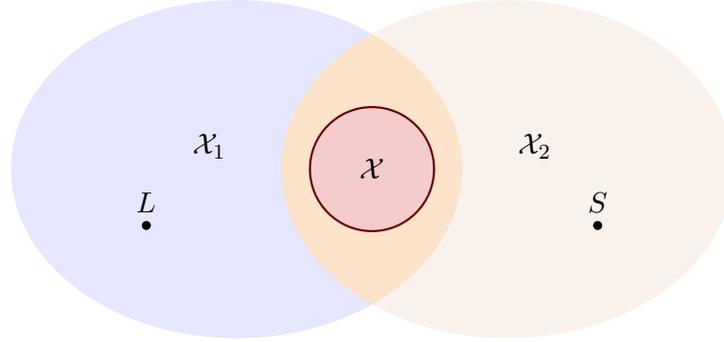
\begin{figure}
\begin{center}
\begin{tikzpicture}[scale=1.5]

  \draw[thick, fill=blue!10, draw=none] (-1.2,0) ellipse (2 and 1.5) node[above left] {$\mathcal{X}_1$};
  \filldraw[black] (-2,-0.5) circle (1pt);
  \node at (-2,-0.3) {$L$};;

  \draw[thick, fill=brown!10, draw=none] (1.2,0) ellipse (2 and 1.5) node[above right] {$\mathcal{X}_2$};
  \filldraw[black] (2,-0.5) circle (1pt);
  \node at (2,-0.3) {$S$};;
;

  \begin{scope}
    \clip (-1.2,0) ellipse (2 and 1.5);
    \fill[orange!30, opacity=0.6] (1.2,0) ellipse (2 and 1.5);
  \end{scope}

  \draw[thick, fill=red!80!black!20, draw=red!40!black] (0,0) circle [radius=.55];
  \node at (0,0) {$\mathcal{X}$};
\end{tikzpicture}
\caption{Diagrammatic representation of the relationships among the sets of complete Leibniz algebras under the two definitions of completeness.}
\label{fig:completeLeib}
\end{center}
\end{figure}

		\section{Biderivations of complete Leibniz algebras}\label{sec:biderivations_complete_Leib}
		
		This section is devoted to the study of biderivations on complete Leibniz algebras, according to both definitions available in the literature.  
		The aim is also to verify that, in both cases, the obtained theorems represent natural extensions of Proposition~4.1 in \cite{DIBARTOLOLAROSA_COMPLETE_BIDER}.
		
		\begin{thm}\label{thm:complete1}
			Let \(L\) be a complete Leibniz algebra according to \Cref{def:complete1}. A bilinear map \(f \colon L \times L \to L\) is a biderivation of $L$ if and only if there exist  \(\varphi,\psi \colon L \to L\) linear maps and \(p,q \colon L \times L \to \Leib(L)\) respectively right and left biderivations of \(L\) such that
			\begin{equation}\label{eq:complete1}
				f(x,y)=[\varphi (x),y]+p(x,y)=[\psi(y),x]+q(x,y)
			\end{equation}
		\end{thm}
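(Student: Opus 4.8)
The plan is to treat the two implications separately, with essentially all of the substance in the ``only if'' direction (the argument parallels the Lie-algebra case of \cite{DIBARTOLOLAROSA_COMPLETE_BIDER}, the new ingredient being the $\Leib(L)$-valued corrections $p,q$). So suppose first that $f$ is a biderivation; by \Cref{def:biderivation}, $f(x,-)$ is a derivation of $L$ for every $x$ and $f(-,y)$ is a derivation for every $y$. Applying part~(b) of \Cref{def:complete1} to the derivation $f(x,-)$ produces an element, which I will call $\varphi(x)$, with $\Imm\bigl(f(x,-)-L_{\varphi(x)}\bigr)\subseteq\Leib(L)$; equivalently, $p(x,y):=f(x,y)-[\varphi(x),y]$ lies in $\Leib(L)$ for all $y$. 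Symmetrically, applying~(b) to $f(-,y)$ gives an element $\psi(y)$ with $q(x,y):=f(x,y)-[\psi(y),x]\in\Leib(L)$ for all $x$. The double equality of \eqref{eq:complete1} then holds by construction, and what remains is to arrange that $\varphi,\psi$ are $\F$-linear and to recognise $p,q$ as one-sided biderivations (\Cref{def:left_right_biderivation}) with values in $\Leib(L)$.

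The crux is linearity. Completeness in the sense of \Cref{def:complete1} forces $\Zz(L/\Leib(L))=0$, hence $\Zz^l(L)=\Leib(L)$, so the left multiplication $L_c$ has image inside $\Leib(L)$ precisely when $c\in\Leib(L)$; in particular $\varphi(x)$ is uniquely determined modulo $\Leib(L)$, and likewise $\psi(y)$. Since $f$ is bilinear and derivations carry $\Leib(L)$ into $\Leib(L)$ (\Cref{invarianza Leib}), the assignments $x\mapsto\varphi(x)+\Leib(L)$ and $y\mapsto\psi(y)+\Leib(L)$ are well defined and $\F$-linear as maps $L/\Leib(L)\to L/\Leib(L)$. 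Composing each with an $\F$-linear section of the projection $L\to L/\Leib(L)$ --- which exists because $\F$ is a field --- yields $\F$-linear lifts, still written $\varphi,\psi\colon L\to L$; passing to these lifts changes $\varphi(x)$ and $\psi(y)$ only within $\Leib(L)$, hence leaves $[\varphi(x),y]$ and $[\psi(y),x]$ unchanged because $[\Leib(L),L]=0$ (\Cref{Leib in centre}), so \eqref{eq:complete1} survives with linear $\varphi,\psi$.

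Identifying $p$ and $q$ is then immediate: $p(x,-)=f(x,-)-L_{\varphi(x)}$ is a difference of derivations, hence a derivation, and $p(-,y)$ is linear, so $p$ is a one-sided biderivation into $\Leib(L)$; the same reasoning applied to $q(-,y)=f(-,y)-L_{\psi(y)}$ handles $q$. For the converse, start from a decomposition as in \eqref{eq:complete1} with $\varphi,\psi$ linear and $p,q$ one-sided biderivations into $\Leib(L)$: the first expression presents $f(x,-)=L_{\varphi(x)}+p(x,-)$ as a sum of two derivations and the second presents $f(-,y)=L_{\psi(y)}+q(-,y)$ as a sum of two derivations, so both partial maps of $f$ are derivations and $f$ is a biderivation by \Cref{def:biderivation}.

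I expect the linearity step to be the main obstacle: \Cref{def:complete1}(b) only supplies, for each $x$ (and each $y$) separately, \emph{some} witnessing element, with no a priori coherence in the variable; promoting these pointwise choices to genuine $\F$-linear maps is exactly what compels the descent to $L/\Leib(L)$, the use of the triviality of its centre to pin the witnesses down modulo $\Leib(L)$, and the lift back through a linear section.
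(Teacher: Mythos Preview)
Your argument is correct and follows the same overall arc as the paper's: apply part~(b) of \Cref{def:complete1} to obtain pointwise witnesses $\tilde\varphi(x)$, $\tilde\psi(y)$, promote these to linear maps, and observe that the remainders $p,q$ are difference-of-derivations and hence one-sided biderivations into $\Leib(L)$. The only substantive difference lies in the linearisation step. The paper simply fixes a basis $\{e_1,\ldots,e_n\}$ of $L$, sets $\varphi(e_i):=\tilde\varphi(e_i)$ and $p(e_i,e_j):=\tilde p(e_i,e_j)$, extends both linearly, and checks by bilinear expansion that $f(x,y)=[\varphi(x),y]+p(x,y)$ persists for arbitrary $x,y$; since $p$ is the bilinear extension of values already in $\Leib(L)$, it automatically lands there. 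Your route --- use $\Zz(L/\Leib(L))=0$ to see that $\tilde\varphi(x)$ is determined modulo $\Leib(L)$, deduce that $x\mapsto\tilde\varphi(x)+\Leib(L)$ is $\F$-linear, then lift through a linear section of $L\to L/\Leib(L)$ --- is equally valid and arguably more conceptual. The trade-off is that your argument genuinely invokes condition~(a) of \Cref{def:complete1}, whereas the paper's basis argument uses only condition~(b); on the other hand, your approach makes transparent \emph{why} the witnesses can be chosen coherently, rather than verifying it by computation. One small wrinkle: you phrase the induced map as going $L/\Leib(L)\to L/\Leib(L)$, which requires the extra check that $\tilde\varphi(\Leib(L))\subseteq\Leib(L)$ (this does follow from \Cref{invarianza Leib} applied to the derivation $f(-,y)$ together with your centre criterion), but in fact viewing it merely as a linear map $L\to L/\Leib(L)$ and post-composing with the section already suffices.
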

		
		\begin{proof}
			
			The ``if'' direction is obvious. Let \(f\) be a biderivation and \(x \in L\). Then \(f(x,-)\) is a derivation, so there exists \(\tilde{\varphi}(x) \in L\) such that 
			\(f(x,y)-L_{\tilde{\varphi}(x)}(y) \in \Leib(L)\), i.e.
			\begin{equation*}
				f(x,y)=[\tilde{\varphi}(x),y]+\tilde{p}(x,y),
			\end{equation*}
			for some \(\tilde{p}(x,y) \in \Leib(L)\). For varying \(x, y \in L\), we define the maps \(\tilde{\varphi} \colon L \to L\) and \(\tilde{p} \colon L \times L \to \Leib(L)\).
			Now, fix a basis \(\{e_1, e_2, \ldots, e_n\}\) of \(L\), and let \(\varphi \colon L \to L\) be a linear map defined by \(\varphi(e_i) = \tilde{\varphi}(e_i)\).  
			Similarly, let \(p \colon L \times L \to \Leib(L)\) be a bilinear map such that \(p(e_i, e_j) = \tilde{p}(e_i, e_j)\).
			For every 
			\(x=\sum_{i=1}^n x_i e_i, y=\sum_{i=1}^n y_i e_i \in L\) we have:
			
			\begin{align*}
				f(x,y) & =f(\sum_{i=1}^n x_i e_i, \sum_{i=1}^n y_i e_i)                                                       \\
				       & =\sum_{i,j=1}^n x_i y_j f(e_i,e_j)                                                                   \\
				       & =\sum_{i,j=1}^n x_i y_j \big\{[\tilde{\varphi}(e_i),e_j]+\tilde{p}(e_i,e_j) \big\}                   \\
				       & =\sum_{i,j=1}^n x_i y_j \big\{[\varphi(e_i),e_j]+p(e_i,e_j) \big\}                                   \\
				       & =\sum_{i,j=1}^n x_i y_j [\varphi(e_i),e_j]+\sum_{i,j=1}^n x_i y_jp(e_i,e_j)                          \\
				       & =[\sum_{i=1}^n x_i\varphi(e_i), \sum_{j=1}^n y_j e_j] + p(\sum_{i=1}^n x_i e_i,\sum_{j=1}^n y_j e_j) \\
				       & =[\varphi(x),y]+p(x,y)                                                                               
			\end{align*}
			
			Furthermore, \(p(x,-)=f(x,-) - L_{\varphi(x)}\), so \(p(x,-)\) is a derivation for every \(x \in L\), i.e. \(p\) is a left biderivation.
			Similar computations with \(f(-,y)\) show that 
			\(f(x,y) = [\psi(y), x] + q(x,y)\), 
			where \(\psi \colon L \to L\) is a linear map and 
			\(q \colon L \times L \to \Leib(L)\) is a right biderivation.
			
		\end{proof}
		
		
		\begin{thm}\label{thm:complete2}
			Let \(L\) be a complete Leibniz algebra according to \Cref{def:complete2}. A bilinear map \(f \colon L \times L \to L\) is a biderivation if and only if there exist \(\varphi,\psi \colon L \to L\) linear maps such that, for any $x,y\in L$,
			\begin{equation}\label{eq:complete2}
				f(x,y)=[\varphi (x),y]=[\psi(y),x].
			\end{equation}  
		\end{thm}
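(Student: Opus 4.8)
For \Cref{thm:complete2} the plan is to run essentially the same argument as in the proof of \Cref{thm:complete1}, but observing that the completeness hypothesis of \Cref{def:complete2} is genuinely stronger on the derivation side: every derivation of $L$ is inner, not merely inner modulo $\Leib(L)$, so the correction terms $p$ and $q$ collapse to zero and one recovers the cleaner identity \Cref{eq:complete2}.

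The ``if'' direction is formal. Suppose $f$ satisfies \Cref{eq:complete2}, i.e.\ $f(x,y)=[\varphi(x),y]=[\psi(y),x]$ for linear maps $\varphi,\psi$. Then for each fixed $x$ the map $f(x,-)$ equals the left multiplication $y\mapsto[\varphi(x),y]$, hence is an inner derivation of $L$; and for each fixed $y$ the map $f(-,y)$ equals the left multiplication $x\mapsto[\psi(y),x]$, again an inner derivation. Bilinearity of $f$ follows from bilinearity of the bracket together with linearity of $\varphi$ (resp.\ $\psi$). Therefore $f$ is a biderivation.

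For the ``only if'' direction, let $f$ be a biderivation. Fix $x\in L$; then $f(x,-)$ is a derivation of $L$, hence inner by \Cref{def:complete2}, so there is $\tilde\varphi(x)\in L$ with $f(x,y)=[\tilde\varphi(x),y]$ for all $y\in L$. This defines a map $\tilde\varphi\colon L\to L$. Fix a basis $\{e_1,\dots,e_n\}$ of $L$ and let $\varphi\colon L\to L$ be the linear map with $\varphi(e_i)=\tilde\varphi(e_i)$; then, exactly as in the proof of \Cref{thm:complete1}, for $x=\sum_i x_i e_i$ and $y=\sum_j y_j e_j$,
\[
f(x,y)=\sum_{i,j}x_iy_j\,f(e_i,e_j)=\sum_{i,j}x_iy_j\,[\tilde\varphi(e_i),e_j]=\sum_{i,j}x_iy_j\,[\varphi(e_i),e_j]=[\varphi(x),y].
\]
Repeating the argument with $f(-,y)$ — which is a derivation of $L$ for each fixed $y$, hence inner — yields a linear map $\psi\colon L\to L$ with $f(x,y)=[\psi(y),x]$ for all $x,y$. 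Combining the two representations gives \Cref{eq:complete2}.

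The only point that needs care — and the reason one passes through a basis rather than simply declaring $\tilde\varphi$ linear — is that $\tilde\varphi(x)$ is determined only up to the left centre, since $L_a=L_b$ precisely when $a-b\in\Zz^l(L)$; thus $\tilde\varphi$ need not be linear a priori. Equivalently, $x\mapsto f(x,-)$ is a linear map from $L$ into the space of inner derivations, which is isomorphic to $L/\Zz^l(L)$, and one chooses a linear section of $L\to L/\Zz^l(L)$ to produce $\varphi$; this is the structural counterpart of the basis construction, and the same comment applies to $\psi$. Apart from this bookkeeping there is no substantial obstacle: the proof is a streamlined version of that of \Cref{thm:complete1} in which the $\Leib(L)$-valued parts disappear because, under \Cref{def:complete2}, derivations are honestly inner.
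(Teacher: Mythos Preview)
Your proof is correct and follows essentially the same route as the paper's: both directions are handled identically, with the ``only if'' part using innerness of $f(x,-)$ to obtain $\tilde\varphi(x)$, then linearising via a basis, and symmetrically for $\psi$. Your final paragraph anticipating the non-linearity of $\tilde\varphi$ (it is only well-defined modulo $\Zz^l(L)$) is exactly the content of the remark the paper places immediately after the proof.
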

		
		\begin{proof}
			The ``if'' direction is obvious. Let \(f\) be a biderivation and let \(x \in L\). Hence \(f(x,-)\) is a derivation, and then there exists \(\Tilde{\varphi}(x) \in L\) such that 
			\(f(x,y)=L_{\Tilde{\varphi}(x)}(y)\), i.e.,
			\(f(x,y)=[\Tilde{\varphi}(x),y]\) for every $x,y\in L$. As \(x\) varies on \(L\) we construct a map \(\Tilde{\varphi} \colon L \to L\) such that \(f(x,y)=[\Tilde{\varphi}(x),y]\). Now, we fix a basis \(\{e_1, e_2, \ldots, e_n\}\) of \(L\) and define a linear map \(\varphi \colon L \to L\) by setting \(\varphi(e_i) = \Tilde{\varphi}(e_i)\) for every \(1 \leq i \leq n\).
			Thus, for every \(x=\sum_{i=1}^n x_i e_i, y\in L\), we have 
			
			\begin{align*}
				f(x,y) & =f(\sum_{i=1}^n x_i e_i,y)                 \\
				       & =\sum_{i=1}^n x_i f(e_i,y)                 \\
				       & =\sum_{i=1}^n x_i [\Tilde{\varphi}(e_i),y] \\
				       & =\sum_{i=1}^n x_i [\varphi(e_i),y]         \\
				       & =[\sum_{i=1}^n x_i \varphi(e_i),y]         \\
				       & =[\varphi(\sum_{i=1}^n x_i e_i),y]         \\
				       & =[\varphi(x),y].                           
			\end{align*}

			Analogous computations yield the existence of a linear map \(\psi \colon L \to L\) satisfying \(f(x,y) = [\psi(y), x]\) for all \(x, y \in L\).
			
		\end{proof}
		
		\begin{remark}
			We observe that, in general, \(\tilde{\varphi} \neq \varphi\). Indeed, \(\tilde{\varphi}\) is not necessarily a linear map.  
			Even if \(f(x,y) = [\tilde{\varphi}(x), y]\) with \(f\) bilinear, we only obtain
			\begin{equation*}
				[\,\tilde{\varphi}(x+y) - \tilde{\varphi}(x) - \tilde{\varphi}(y),\, z\,]
				= f(x+y,z) - f(x,z) - f(y,z) = 0,
			\end{equation*}
			for all \(x,y,z \in L\). Hence,
			\[
				\tilde{\varphi}(x+y) - \tilde{\varphi}(x) - \tilde{\varphi}(y) \in \Zz^l(L),
			\]
			and analogously,
			\[
				\tilde{\varphi}(\lambda x) - \lambda\, \tilde{\varphi}(x) \in \Zz^l(L).
			\]
			This shows that \(\tilde{\varphi}\) is linear only modulo the left centre \(\Zz^l(L)\).

		\end{remark}
		
		If \(L\) is a Lie algebra, then it is a complete Leibniz algebra under both definitions. Moreover, \Cref{thm:complete1} and \Cref{thm:complete2} coincide with Proposition~4.1 in \cite{DIBARTOLOLAROSA_COMPLETE_BIDER}.
		
		\begin{corollary}
			Let \(L\) be a complete Lie algebra.  
			A bilinear map \(f \colon L \times L \to L\) is a biderivation if and only if there exist two linear maps \(\varphi, \psi \colon L \to L\) such that  
			\begin{equation*}
				f(x,y) = [\varphi(x), y] = [x, \psi(y)],
			\end{equation*}
			for all \(x, y \in L\).
		\end{corollary}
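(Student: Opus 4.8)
The plan is to deduce this directly from \Cref{thm:complete2} (equivalently \Cref{thm:complete1}), exploiting that for a Lie algebra $\Leib(L)=0$. First I would check that a complete Lie algebra $L$ in Jacobson's sense is complete in the sense of \Cref{def:complete2}: its center is trivial by hypothesis, and every derivation is inner by hypothesis, so both axioms hold verbatim. It is equally complete in the sense of \Cref{def:complete1}, since $\Leib(L)=0$ gives $\Zz(L/\Leib(L))=\Zz(L)=0$, and the condition $\Imm(D-L_x)\subseteq\Leib(L)=0$ just says $D=L_x$; hence \Cref{thm:complete2} (or \Cref{thm:complete1} with $p=q=0$, which is forced since they take values in $\Leib(L)=0$) applies.

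Next I would invoke \Cref{thm:complete2}: a bilinear map $f$ is a biderivation of $L$ if and only if there exist linear maps $\varphi,\psi\colon L\to L$ with $f(x,y)=[\varphi(x),y]=[\psi(y),x]$ for all $x,y\in L$. The only remaining point is to rewrite the second expression in the stated form. Since the Lie bracket is alternating, $[\psi(y),x]=-[x,\psi(y)]=[x,(-\psi)(y)]$, and $-\psi$ is again linear; replacing $\psi$ by $-\psi$ yields $f(x,y)=[\varphi(x),y]=[x,\psi(y)]$, as desired. For the converse, if $f$ has this form then $f(x,-)=L_{\varphi(x)}=\ad_{\varphi(x)}$ is an inner derivation and $f(-,y)=-L_{\psi(y)}=-\ad_{\psi(y)}$ is a derivation, so $f$ is a biderivation; this is exactly the ``if'' direction already observed in the theorems.

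There is essentially no obstacle: the corollary is merely the specialisation of \Cref{thm:complete2} to $\Leib(L)=0$, the only subtlety being the sign introduced by antisymmetry of the Lie bracket, together with the routine verification that Jacobson-completeness implies Leibniz-completeness in either sense. One may additionally remark that this recovers Proposition~4.1 of \cite{DIBARTOLOLAROSA_COMPLETE_BIDER}.
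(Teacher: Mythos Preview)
Your proposal is correct and follows essentially the same route as the paper: specialise \Cref{thm:complete2} (or \Cref{thm:complete1}) to the case $\Leib(L)=0$, noting that a complete Lie algebra is complete in both Leibniz senses. You are in fact more explicit than the paper on two points it leaves implicit, namely the verification that Jacobson-completeness yields both Leibniz notions of completeness, and the sign swap $[\psi(y),x]=[x,(-\psi)(y)]$ needed to pass from the conclusion of \Cref{thm:complete2} to the form stated in the corollary.
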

		
		\begin{proof}
			The "if" direction is obvious. For the "only if" direction, the case of \Cref{def:complete2} is immediate. In the case of \Cref{def:complete1}, since \(\text{Leib}(L) = 0\), we again obtain the same result.
		\end{proof}
		
		\section{Symmetric and Skew-Symmetric Biderivations
		 of Complete Leibniz Algebras}\label{sec:Symm_skresymm_bider}
		
		In this section, we study symmetric and skew-symmetric biderivations of complete Leibniz algebras. 
		In addition, we characterise the linear maps that determine these biderivations in the case of a complete Leibniz algebra, 
		and we investigate the behaviour of those biderivations arising from commuting and skew-commuting linear maps.
		
		A biderivation \(f \colon L \times L \to L\) is called \emph{symmetric} (resp. \emph{skew-symmetric}) if 
		\(f(x, y) = f(y, x)\) (resp. \(f(x, y) = -f(y, x)\)) for all \(x, y \in L\).
		As in the case of biderivations of an algebra over a field, every biderivation 
		\( f \colon L \times L \to L \) 
		of a Leibniz algebra \( L \) can be decomposed into the sum of its symmetric and antisymmetric parts:
		\[
			f = \frac{1}{2} f^+ + \frac{1}{2} f^-,
		\]
		where \( f^+ \) and \( f^- \) denote the symmetric and skew-symmetric components of \( f \), respectively, defined by
		\[
			f^+(x, y) = f(x, y) + f(y, x), \qquad
			f^-(x, y) = f(x, y) - f(y, x),
		\]
		for all \( x, y \in L \).

		It is straightforward to verify that both \(f^+\) and \(f^-\) are biderivations.
		We now analyse the case of symmetric/skew-symmetric for each definition of a complete Leibniz algebra.
		
		\begin{prop}
			Let \(L\) be a complete Leibniz algebra according to \Cref{def:complete1} and let \(f\) be a biderivation of \(L\). Then, referring to \Cref{eq:complete1}:
			\begin{itemize}
				\item[a)] If \(f\) is symmetric, then \([\sigma(x),y] \in \Leib(L)\), with \(\sigma=\varphi - \psi\).
				      
				\item[b)] If \(f\) is skew-symmetric, then \([\theta(x),y] \in \Leib(L)\), with \(\theta=\varphi + \psi\).
			\end{itemize}
			
		\end{prop}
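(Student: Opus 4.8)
The plan is to argue purely formally from the two decompositions of $f$ provided by \Cref{thm:complete1} (i.e.\ the two equalities in \Cref{eq:complete1}), playing the symmetry or skew-symmetry of $f$ against the swap of its two arguments. The key structural input is that both $p$ and $q$ take their values in $\Leib(L)$; consequently any expression built from $p$ and $q$ alone --- with arbitrary signs and with arguments in either order --- again lies in $\Leib(L)$, and this is exactly what will let us absorb the ``error terms'' into the Leibniz kernel.

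For part a), suppose $f$ is symmetric. Evaluate $f(x,y)$ using the first equality of \Cref{eq:complete1}, namely $f(x,y)=[\varphi(x),y]+p(x,y)$, and evaluate $f(y,x)$ using the second equality of the same formula, which gives $f(y,x)=[\psi(x),y]+q(y,x)$ after substituting $(y,x)$ for $(x,y)$. Comparing the two via $f(x,y)=f(y,x)$ yields
\[
[\varphi(x),y]+p(x,y)=[\psi(x),y]+q(y,x),
\]
hence
\[
[\sigma(x),y]=[\varphi(x)-\psi(x),y]=q(y,x)-p(x,y)\in\Leib(L)
\]
for all $x,y\in L$, which is the assertion. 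Part b) is entirely analogous: if $f$ is skew-symmetric, the same comparison gives $[\varphi(x),y]+p(x,y)=-[\psi(x),y]-q(y,x)$, so $[\theta(x),y]=[\varphi(x)+\psi(x),y]=-q(y,x)-p(x,y)\in\Leib(L)$.

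There is no genuine obstacle here: the statement is an immediate consequence of \Cref{thm:complete1}. The only point requiring care is the bookkeeping --- which slot of $p$ versus of $q$ is the one in which it is a derivation, and in which argument $\psi$ ends up being evaluated after swapping the two entries of $f$ in the second decomposition --- but once this is tracked correctly the conclusion follows at once. If one wishes, one can additionally record that $\sigma$ and $\theta$ need not be linear but are linear modulo $\Zz^l(L)$, in the same spirit as the remark following \Cref{thm:complete2}; this, however, is not needed for the statement as formulated.
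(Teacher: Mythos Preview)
Your proof is correct and follows essentially the same approach as the paper: compare the two decompositions from \Cref{eq:complete1} under the (skew-)symmetry hypothesis and observe that the difference lands in $\Leib(L)$ because $p$ and $q$ take values there. One small aside to drop: since $\varphi$ and $\psi$ are already linear by \Cref{thm:complete1}, so are $\sigma$ and $\theta$; your final parenthetical remark about linearity only modulo $\Zz^l(L)$ is unnecessary here.
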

		
		\begin{proof}
			\begin{itemize}
				    
				\item[a)] Since \( f(x, y) = f(y, x) \), we have  
				      \[
				      	[\varphi(x), y] + p(x, y) = [\psi(x), y] + q(y, x),
				      \]  
				      so  
				      \[
				      	[\varphi(x) - \psi(x), y] = -p(x, y) + q(y, x) \in \operatorname{Leib}(L),
				      \]  
				      because the images of \( p \) and \( q \) lie in the Leibniz kernel of $L$. 
				      
				\item[b)]  Using the identity \( f(x, y) = -f(y, x) \) and the same arguments, we obtain  
				      \[
				      	[\varphi(x) + \psi(x), y] = -\,p(x, y) - q(y, x) \in \operatorname{Leib}(L),
				      \]  
				      since the images of \(p\) and \(q\) lie in the Leibniz kernel of \(L\).
			\end{itemize}
			
		\end{proof}

		\begin{prop} \label{prop:sym-skewsym}
			Let \(L\) be a complete Leibniz algebra according to \Cref{def:complete2} and \(f\) a biderivation of \(L\). Then, referring to \Cref{eq:complete2}:
			
			\begin{itemize}
				\item[a)] If \(f\) is symmetric, then $\Imm\sigma\subseteq\Zz^l(L)$, where $\sigma=\varphi-\psi$. 
				      
				\item[b)] If \(f\) is skew-symmetric, then $\Imm\theta\subseteq\Zz^l(L)$, where $\theta=\varphi+\psi$. 
				      
			\end{itemize}
			
		\end{prop}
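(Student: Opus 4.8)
The plan is to read off the two representations of $f$ furnished by \Cref{thm:complete2} and compare them on the pair $(x,y)$ against the swapped pair $(y,x)$. Concretely, \Cref{thm:complete2} supplies linear maps $\varphi,\psi\colon L\to L$ with $f(x,y)=[\varphi(x),y]$ and, simultaneously, $f(x,y)=[\psi(y),x]$ for all $x,y\in L$. Evaluating the second identity at the swapped pair gives $f(y,x)=[\psi(x),y]$; this is the key manoeuvre, since it rewrites $f(y,x)$ with the \emph{second} slot again equal to $y$, so that it can be compared directly with $f(x,y)=[\varphi(x),y]$.

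For part (a), symmetry $f(x,y)=f(y,x)$ combined with these two expressions yields $[\varphi(x),y]=[\psi(x),y]$, hence $[\varphi(x)-\psi(x),y]=0$ for all $y\in L$. By the definition of the left centre this says $\sigma(x)=\varphi(x)-\psi(x)\in\Zz^l(L)$, and as $x$ ranges over $L$ we conclude $\Imm\sigma\subseteq\Zz^l(L)$. Part (b) is the same argument, except that skew-symmetry $f(x,y)=-f(y,x)$ turns the comparison into $[\varphi(x),y]=-[\psi(x),y]$, i.e.\ $[\varphi(x)+\psi(x),y]=0$ for all $y$, whence $\theta(x)=\varphi(x)+\psi(x)\in\Zz^l(L)$ and $\Imm\theta\subseteq\Zz^l(L)$.

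There is essentially no obstacle beyond selecting the correct pair of representations to equate: one must avoid using $f(y,x)=[\varphi(y),x]$ and instead use $f(y,x)=[\psi(x),y]$ (equivalently, compare $[\psi(y),x]$ with $[\varphi(y),x]$), since only then do the brackets share a common second argument and the left-centre condition fall out cleanly. It is worth noting that, in contrast with the \Cref{def:complete1} case treated in the previous proposition, no Leibniz-kernel correction term appears here because under \Cref{def:complete2} the maps $p,q$ of \Cref{thm:complete1} are absent; this is precisely why the conclusion strengthens from ``lies in $\Leib(L)$ after bracketing with an arbitrary element'' to ``the image itself lies in $\Zz^l(L)$''.
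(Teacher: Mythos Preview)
Your argument is correct and matches the paper's own proof essentially line for line: both compare $f(x,y)=[\varphi(x),y]$ with $f(y,x)=[\psi(x),y]$ and read off the left-centre condition from (skew-)symmetry. The extra commentary you include about why no $\Leib(L)$ correction appears is accurate and a welcome clarification, but the core reasoning is identical to the paper's.
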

		
		\begin{proof}
			
			\begin{itemize}
				    
				\item[a)] Since \( f(x,y) = f(y,x) \), we have  
				      \begin{equation*}
				      	[\varphi(x), y] = [\psi(x), y],
				      \end{equation*}
				      which implies  
				      \begin{equation*}
				      	[\varphi(x) - \psi(x), y] = 0 \quad \text{for all } x,y \in L,
				      \end{equation*}
				      and hence  
				      \begin{equation*}
				      	\varphi(x) - \psi(x) = \sigma(x) \in \Zz^l(L) \quad \text{for all } x \in L.
				      \end{equation*}
				      
				\item[b)] Using the identity \( f(x, y) = -f(y, x) \) and similar arguments, we deduce that  
				      \begin{equation*}
				      	\varphi(x) + \psi(x) = \theta(x) \in \Zz^l(L) \quad \text{for all } x \in L.
				      \end{equation*}
			\end{itemize}
			
		\end{proof}
		
		Let \(L\) be a Leibniz algebra (not necessarily complete), and let \(V\) be an \(L\)-module. A linear map \(g \colon L \to V\) is called \emph{commuting} if 
		\[
			[g(x), x] = [x, g(x)] = 0 \quad \text{for every } x \in L,
		\] 
		(see \cite{casas2025}, Definition 4.3, for a reference).
		This implies that 
		\[
			[g(x), y] = -[g(y), x] \quad \text{for all } x, y \in L.
		\]  
		Although no definition of a skew-commuting map appears in the extant literature, this paper proposes one, motivated by the notion of commuting maps and by analogous situations in the context of Lie algebras (see, e.g., \cite{Bresar2018,chen2016,chengwangsunzhang2017,hanwangxia2016,wangyu2013}).
		Similarly, a linear map \(g \colon L \to V\) is called \emph{skew-commuting} if 
		\[
			[g(x), y] = [g(y), x] \quad \text{for all } x, y \in L.
		\]  
		More generally, the notion of commuting linear maps can be defined for a wider class of algebraic structures, such as rings and their modules, with some variations.
		To a commuting (or skew-commuting) linear map \(g\), we can associate the bilinear map 
		\(f \colon L \times L \to L\) defined by 
		\[
			f(x, y) = [g(x), y].
		\]
		The following propositions can be easily proved.
		
		\begin{prop} \label{prop:commuting}
			Let $g\colon L\to L$ be a commuting linear map. Then, the bilinear map \(f \colon L \times L \to L\) with \(f(x,y)=[g(x),y]\) is a skew-symmetric biderivation of \(L\).
		\end{prop}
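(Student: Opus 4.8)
The plan is to verify that $f$ meets the definition of a skew-symmetric biderivation by checking three things: that $f$ is skew-symmetric, that $f$ is a left biderivation, and that $f$ is a right biderivation; by the remark following \Cref{def:left_right_biderivation} the last two are together equivalent to $f$ being a biderivation. (Alternatively, one may check only skew-symmetry and the left-biderivation identity, and then observe that any skew-symmetric left biderivation is automatically a right biderivation --- this follows by applying the left-biderivation identity to a permuted triple and using skew-symmetry to move the bracket from one slot of $f$ to the other.)

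The first two points are essentially immediate. Skew-symmetry is a restatement of the hypothesis: since $g$ is commuting, we have $[g(x),y]=-[g(y),x]$ for all $x,y\in L$, i.e. $f(x,y)=-f(y,x)$. For the left-biderivation identity $f(x,[y,z])=[f(x,y),z]+[y,f(x,z)]$, substituting $f(u,v)=[g(u),v]$ turns it into
\[
[g(x),[y,z]]=[[g(x),y],z]+[y,[g(x),z]],
\]
which is precisely the left Leibniz identity \Cref{eq:left_diff_id} evaluated at $(g(x),y,z)$ --- equivalently, $f(x,-)=L_{g(x)}$ is an inner derivation. Bilinearity of $f$ is clear, since $g$ is linear and the bracket bilinear.

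The one genuine computation is the right-biderivation identity $f([x,y],z)=[x,f(y,z)]+[f(x,z),y]$, since there the bracket sits in the first slot of $f$, where $g$ --- being merely linear, not a derivation --- offers no simplification. I would push everything into the second slot via the skew-symmetry just established: $f([x,y],z)=-f(z,[x,y])=-[g(z),[x,y]]$; then expand $[g(z),[x,y]]$ by \Cref{eq:left_diff_id} at $(g(z),x,y)$; and finally apply $[g(u),v]=-[g(v),u]$ to each of the two resulting terms, recognising them as $[x,[g(y),z]]=[x,f(y,z)]$ and $[[g(x),z],y]=[f(x,z),y]$. Assembling these three properties gives that $f$ is a skew-symmetric biderivation. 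The only point that requires any attention --- indeed the sole obstacle --- is tracking the signs correctly through this chain of substitutions.
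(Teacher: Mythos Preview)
Your argument is correct: skew-symmetry follows directly from the identity $[g(x),y]=-[g(y),x]$ recorded just before the proposition, the left-biderivation identity is precisely the Leibniz rule applied with first entry $g(x)$, and your reduction of the right-biderivation identity via skew-symmetry and the Leibniz rule at $(g(z),x,y)$ goes through with the signs as you indicate. The paper itself gives no proof of this proposition, merely remarking that it ``can be easily proved''; your verification is exactly the routine check the authors omit.
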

		
		\begin{prop} \label{prop:skew-commuting}
			Let $g\colon L\to L$ be a skew-commuting linear map. Then, the bilinear map \(f \colon L \times L \to L\) with \(f(x,y)=[g(x),y]\) is a symmetric biderivation of \(L\).
		\end{prop}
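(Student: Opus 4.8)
The plan is to verify the three requirements in turn: that $f$ is bilinear, that $f$ is symmetric, and that $f$ is a biderivation. Bilinearity is immediate --- linearity of $f$ in the second slot is clear, and linearity in the first slot follows from the linearity of $g$ together with the bilinearity of the bracket. Symmetry is also immediate from the hypothesis: for all $x,y\in L$ we have $f(x,y)=[g(x),y]=[g(y),x]=f(y,x)$, the middle equality being precisely the skew-commuting condition.

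For the biderivation property I would first observe that, for each fixed $x\in L$, the map $f(x,-)$ equals the inner derivation $L_{g(x)}$, which is a derivation of $L$ by \Cref{eq:left_diff_id}; thus $f$ satisfies \Cref{eq:leftbider} and so is a left biderivation in the sense of \Cref{def:left_right_biderivation}. Rather than check the right-biderivation identity \Cref{eq:rightbider} directly, I would deduce it from symmetry: for all $x,y,z\in L$,
\[
f([x,y],z)=f(z,[x,y])=[f(z,x),y]+[x,f(z,y)]=[f(x,z),y]+[x,f(y,z)],
\]
where the outer equalities use $f(a,b)=f(b,a)$ and the middle one is \Cref{eq:leftbider} applied with $z$ in the first slot. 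This is exactly \Cref{eq:rightbider}, so $f$ is also a right biderivation, hence a biderivation (a map being a biderivation precisely when it is both a left and a right biderivation). Combined with the symmetry already established, this shows that $f$ is a symmetric biderivation.

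Everything here is routine; the only point requiring a little care is the logical order of the last step --- obtaining the right-hand Leibniz identity from symmetry together with the left-hand one, instead of attempting it head-on, which would be more cumbersome --- so there is no genuine obstacle beyond bookkeeping. The companion statement \Cref{prop:commuting} for commuting maps can be proved by the identical scheme, replacing symmetry by skew-symmetry throughout (the skew-commuting condition giving $f(x,y)=-f(y,x)$, and the same manipulation of \Cref{eq:leftbider} then yielding \Cref{eq:rightbider} up to signs).
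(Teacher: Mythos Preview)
Your proof is correct; the paper itself omits the argument entirely, stating only that the proposition ``can be easily proved,'' so your write-up supplies exactly the routine verification the authors had in mind. One tiny slip in your closing parenthetical: it is the \emph{commuting} condition (not the skew-commuting one) that yields $f(x,y)=-f(y,x)$ in the companion \Cref{prop:commuting}.
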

		
		In particular, if \(L\) is a complete Lie algebra, the converse also holds: every symmetric biderivation is of the form described in \Cref{prop:commuting}, and every skew-symmetric biderivation is of the form described in \Cref{prop:skew-commuting} (as proved in \cite{DIBARTOLOLAROSA_COMPLETE_BIDER}).
		
		Following the definitions and the concluding propositions and observations of this section, it becomes natural to consider the case of complete Leibniz algebras. 
		Unfortunately, in the case of \Cref{def:complete1}, the "if and only if" condition does not hold, as the following examples illustrate.
		
		\begin{ex}
			Let \(L = \langle x, y \rangle\) be a Lie algebra with non-zero bracket \([x, y] = y\), and let \(V = \langle v \rangle\) be an \(L\)-module with actions defined by \(x \cdot v = v\) and \(y \cdot v = 0\). It is a module because
			\begin{equation*}
				0 = y \cdot v = [x, y] \cdot v = x \cdot (y \cdot v) - y \cdot (x \cdot v) = x \cdot 0 - y \cdot 0 = 0.
			\end{equation*}
			
			Next, we consider \(\tilde{L} = L \oplus V\) with the operation defined by
			\begin{equation*}
				[X + a, Y + b] = [X, Y] + X \cdot b
			\end{equation*}
			for all \(X, Y \in L\) and \(a, b \in V\).  
			This is a complete Leibniz algebra because \(\operatorname{Leib}(\tilde{L}) = V\) and \(\tilde{L} / \operatorname{Leib}(\tilde{L}) = L\) is a complete Lie algebra.
			
			Now, let \(\delta \colon \tilde{L} \to \tilde{L}\) be the linear map defined by
			\begin{equation*}
				\delta(x) = \delta(y) = 0, \qquad \delta(v) = v.
			\end{equation*}
			
			We claim that \(\delta\) is a derivation. Indeed, for all scalars \(\alpha, \beta, \gamma, \lambda, \sigma, \mu\),
			\begin{equation*}
				\delta([\alpha x + \beta y + \gamma v, \lambda x + \sigma y + \mu v]) 
				= \delta((\alpha\sigma - \beta\lambda)y + \alpha \mu v) 
				= \alpha \mu v,
			\end{equation*}
			and
			\begin{align*}
				  & [\delta(\alpha x + \beta y + \gamma v), \lambda x + \sigma y + \mu v] + [\alpha x + \beta y + \gamma v, \delta(\lambda x + \sigma y + \mu v)] \\
				  & = [\gamma v, \lambda x + \sigma y + \mu v] + [\alpha x + \beta y + \gamma v, \mu v]                                                           \\
				  & = \alpha \mu v,                                                                                                                               
			\end{align*}
			which shows that the derivation property holds.
			We define a symmetric bilinear map 
			\(F \colon \tilde{L} \times \tilde{L} \to \tilde{L}\) by
			\begin{equation*}
				\begin{aligned}
					F(x, x) & = 0, & F(x, y) & = 0, & F(x, v) & = 0, \\
					F(y, x) & = 0, & F(y, y) & = 0, & F(y, v) & = 0, \\
					F(v, x) & = 0, & F(v, y) & = 0, & F(v, v) & = v. 
				\end{aligned}
			\end{equation*}
			
			It is a biderivation because \(F(z, -) = F(-, z) = 0\) for all \(z \in L\), and 
			\(F(v, -) = F(-, v) = \delta\).
			Assume, for the sake of argument, that there exists a linear map 
			\(g \colon \tilde{L} \to \tilde{L}\) such that 
			\[
				F(z, t) = [g(z), t] \quad \text{for all } z, t \in \tilde{L}.
			\]
			Let \(g(v)=Ax+By+Cv\). Then we have:
			\begin{itemize}
				\item $0 = F(v, x) = [g(v), x] = -B y$, which implies $B = 0$.
				\item $0 = F(v, y) = [g(v), y] = A y$, which implies $A = 0$.
				\item $v = F(v, v) = [g(v), v] = 0$, which is absurd.
			\end{itemize}
			
		\end{ex}
		
		\begin{ex}
			Let \(L = \langle x, y \rangle\) be a Lie algebra with non-zero bracket \([x, y] = y\), and let \(V = \langle v, w \rangle\) be an \(L\)-module with actions defined by
			\[
				x \cdot v = v, \quad x \cdot w = w, \quad y \cdot v = y \cdot w = 0,
			\] 
			i.e., \(x \cdot\) acts as the identity on \(V\) and \(y \cdot\) acts as the zero map on \(V\).  
			
			Indeed, \(V\) is an \(L\)-module. We then construct \(\tilde{L} = L \oplus V\) with the operation
			\begin{equation*}
				[X + a, Y + b] = [X, Y] + X \cdot b
			\end{equation*}
			for all \(X, Y \in L\) and \(a, b \in V\).
			
			It is a complete Leibniz algebra (again, \(Leib(L)=V\) and \(\frac{\Tilde{L}}{Leib(L)}=L\)). 
			Let \(\delta \colon \tilde{L} \to \tilde{L}\) be a linear map such that \(\delta(V) \subseteq V\) and \(\delta(L) = 0\).  
			We claim that \(\delta\) is a derivation. Indeed, for all \(X, Y \in L\) and \(a, b \in V\), we have
			\begin{equation*}
				\delta([X + a, Y + b]) = \delta([X, Y] + X \cdot b) = \delta(X \cdot b),
			\end{equation*}
			and
			\begin{equation*}
				[\delta(X + a), Y + b] + [X + a, \delta(Y + b)] = [\delta(a), Y + b] + [X + a, \delta(b)] = X \cdot \delta(b),
			\end{equation*}
			since \(\delta(a) \in V\).  
			
			Moreover, if we write \(X = \alpha x + \beta y\), then
			\begin{equation*}
				\delta((\alpha x + \beta y) \cdot b) = \delta(\alpha b) = \alpha \delta(b),
			\end{equation*}
			and
			\begin{equation*}
				(\alpha x + \beta y) \cdot \delta(b) = \alpha \delta(b).
			\end{equation*}
			
			Hence, \(\delta\) satisfies the derivation property. Hence, every linear map \(\delta \colon \tilde{L} \to \tilde{L}\) satisfying \(\delta(L) = 0\) and \(\delta(V) \subseteq V\) is a derivation of \(\tilde{L}\).
			
			Let \(F \colon \tilde{L} \times \tilde{L} \to \tilde{L}\) be a skew-symmetric bilinear map defined by
			\begin{equation*}
				\begin{aligned}
					F(x,x) & = 0, & F(x,y) & = 0, & F(x,v) & = 0,  & F(x,w) & = 0, \\
					F(y,x) & = 0, & F(y,y) & = 0, & F(y,v) & = 0,  & F(y,w) & = 0, \\
					F(v,x) & = 0, & F(v,y) & = 0, & F(v,v) & = 0,  & F(v,w) & = v, \\
					F(w,x) & = 0, & F(w,y) & = 0, & F(w,v) & = -v, & F(w,w) & = 0. 
				\end{aligned}
			\end{equation*}
			
			It is a biderivation because \(F(z,-)\) and \(F(-,z)\) are derivations for all \(z \in \tilde{L}\).  
			
			Assume, for the sake of argument, that there exists a linear map \(g \colon \tilde{L} \to \tilde{L}\) such that 
			\begin{equation*}
			    F(z, t) = [g(z), t] \quad \text{for all } z, t \in \tilde{L}.
			\end{equation*}
			
			Let \(g(v) = A x + B y + C v + D w\). Then we have:
			\begin{itemize}
				\item $0 = F(v, x) = [g(v), x] = -B y$, which implies $B = 0$.
				\item $0 = F(v, y) = [g(v), y] = A y$, which implies $A = 0$.
				\item $v = F(v, w) = [g(v), w] = 0$, which is absurd.
			\end{itemize}
		\end{ex}
		
		Lastly, we consider \Cref{def:complete2}. If \(f\) is a biderivation of a complete Leibniz algebra \(L\), then there exist linear maps 
		\(\varphi, \psi \colon L \to L\) such that
		\begin{equation*}
			f(x, y) = [\varphi(x), y] = [\psi(y), x]
		\end{equation*}
		for all \(x, y \in L\).  
		
		Let \(W\) be a complementary subspace of \(\Zz^l(L)\), i.e., \(L = \Zz^l(L) \oplus W\) as a vector space, and let 
		\(\pi \colon L \to W\) denote the corresponding projection map.
		We have 
		\begin{equation*}
			[\pi(\varphi(x)),y]=[\varphi(x),y]=[\psi(y),x]=[\pi(\psi(y)),x]
		\end{equation*}
		
		By \Cref{prop:sym-skewsym}, if \(f\) is symmetric, then 
		\(\pi(\varphi(x)) - \pi(\psi(x)) \in W \cap \Zz^l(L)\), so \(\pi \varphi = \pi \psi\).  
		If we set \(g = \pi \varphi\), we have 
		\begin{equation*}
		    f(x, y) = [\pi(\varphi(x)), y] = [g(x), y].
		\end{equation*} 
		
		Similarly, if \(f\) is skew-symmetric, then \(\pi \varphi = -\pi \psi\).  
		Setting \(g = \pi \varphi\) again, we obtain 
		\begin{equation*}
		    f(x, y) = [\pi(\varphi(x)), y] = [g(x), y].
		\end{equation*} 
		In other words, we have the following results.
		
		\begin{prop} 
			Let \(L\) be a complete Leibniz algebra in the sense of \Cref{def:complete2}.  
			A symmetric bilinear map \(f \colon L \times L \to L\) is a biderivation if and only if there exists a skew-commuting linear map 
			\(g \colon L \to L\) such that 
			\begin{equation*}
				f(x, y) = [g(x), y] \quad \text{for all } x, y \in L.
			\end{equation*}
		\end{prop}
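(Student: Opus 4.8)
The plan is to read the statement off from the structure theorem for biderivations of such algebras, following the discussion that precedes it. The ``if'' direction is immediate from \Cref{prop:skew-commuting}: if $g\colon L\to L$ is skew-commuting and $f(x,y)=[g(x),y]$, then $f$ is a symmetric biderivation. So the content lies in the ``only if'' direction.

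First I would apply \Cref{thm:complete2} to the symmetric biderivation $f$, obtaining linear maps $\varphi,\psi\colon L\to L$ with $f(x,y)=[\varphi(x),y]=[\psi(y),x]$ for all $x,y\in L$. Next I would fix a vector-space complement $W$ of the left centre, $L=\Zz^l(L)\oplus W$, with associated projection $\pi\colon L\to W$, and set $g=\pi\circ\varphi$. This $g$ is linear, being a composition of linear maps. Since $\varphi(x)-g(x)\in\Zz^l(L)$ for every $x$, and every element of $\Zz^l(L)$ annihilates $L$ on the left, we get $[g(x),y]=[\varphi(x),y]=f(x,y)$ for all $x,y\in L$, which is the desired representation of $f$.

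Finally I would verify that $g$ is skew-commuting, i.e.\ $[g(x),y]=[g(y),x]$ for all $x,y$. This follows from the symmetry of $f$: indeed $[g(x),y]=f(x,y)=f(y,x)=[\varphi(y),x]=[g(y),x]$, where the third equality is the second representation in \Cref{thm:complete2} applied to the pair $(y,x)$, and the last again uses $\varphi(y)-g(y)\in\Zz^l(L)$. Equivalently, \Cref{prop:sym-skewsym}(a) gives $\Imm(\varphi-\psi)\subseteq\Zz^l(L)$, hence $\pi\varphi=\pi\psi$, so $g$ may equally be written as $\pi\psi$; combined with $[g(x),y]=[\varphi(x),y]$ and $[g(y),x]=[\psi(y),x]$ this yields the same conclusion.

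I do not expect a genuine obstacle here; the only point requiring care is why one cannot simply take $g=\varphi$: in general $\varphi$ is skew-commuting only modulo $\Zz^l(L)$, and replacing it by $\pi\varphi$ is harmless precisely because $\Zz^l(L)$ is a left annihilator, so the map $[-,y]$ does not see the discarded component. Everything else is the bilinearity and derivation bookkeeping already done in \Cref{thm:complete2} and \Cref{prop:sym-skewsym}.
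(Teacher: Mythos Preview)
Your proposal is correct and follows essentially the same approach as the paper: apply \Cref{thm:complete2} to obtain $\varphi,\psi$, pass to a complement $W$ of $\Zz^l(L)$ with projection $\pi$, set $g=\pi\varphi$, and use \Cref{prop:sym-skewsym}(a) (or directly the symmetry of $f$) to get $\pi\varphi=\pi\psi$, whence $g$ is skew-commuting. The only minor slip is in your labeling: the equality $f(y,x)=[\varphi(y),x]$ is the \emph{first} representation in \Cref{thm:complete2} applied to $(y,x)$, not the second, but this does not affect the argument.
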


		\begin{prop} 
			Let \(L\) be a complete Leibniz algebra in the sense of \Cref{def:complete2}.  
			A skew-symmetric bilinear map \(f \colon L \times L \to L\) is a biderivation if and only if there exists a commuting linear map 
			\(g \colon L \to L\) such that 
			\begin{equation*}
				f(x, y) = [g(x), y] \quad \text{for all } x, y \in L.
			\end{equation*}
		\end{prop}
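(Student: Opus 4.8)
\medskip

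The argument is completely parallel to that of the preceding (symmetric) proposition, with $f(x,y)=f(y,x)$ replaced throughout by $f(x,y)=-f(y,x)$. For the ``if'' direction there is nothing to add: by \Cref{prop:commuting}, whenever $g\colon L\to L$ is a commuting linear map the bilinear map $f(x,y)=[g(x),y]$ is automatically a skew-symmetric biderivation of $L$.

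For the ``only if'' direction I would mirror the discussion preceding the statement. Given a skew-symmetric biderivation $f$, \Cref{thm:complete2} supplies linear maps $\varphi,\psi\colon L\to L$ with $f(x,y)=[\varphi(x),y]=[\psi(y),x]$, and \Cref{prop:sym-skewsym}(b) gives $\Imm(\varphi+\psi)\subseteq\Zz^l(L)$. Fixing a complement $W$ with $L=\Zz^l(L)\oplus W$ and the associated projection $\pi\colon L\to W$, the fact that $\Zz^l(L)$ acts trivially from the left yields $[\varphi(x),y]=[\pi\varphi(x),y]$ and $[\psi(y),x]=[\pi\psi(y),x]$, while $\pi$ annihilating $\Zz^l(L)$ forces $\pi\varphi+\pi\psi=0$. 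Setting $g:=\pi\varphi$, which is linear, we obtain $f(x,y)=[g(x),y]$ and, since $\pi\psi=-g$ and $f$ is skew-symmetric, $f(x,y)=[\psi(y),x]=-[g(y),x]$; hence $[g(x),y]=-[g(y),x]$ for all $x,y\in L$.

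It then remains to check that $g$ is a commuting map. Putting $y=x$ in $[g(x),y]=-[g(y),x]$ gives $2[g(x),x]=0$, so $[g(x),x]=0$ because $\car\F=0$. For the complementary identity, \Cref{remark:xy+yx} yields $[x,g(x)]+[g(x),x]\in\Leib(L)$, hence $[x,g(x)]\in\Leib(L)\subseteq\Zz^l(L)$; I would then show that this element in fact vanishes, using that $L$ is complete in the sense of \Cref{def:complete2}, i.e.\ that $\Zz(L)=0$. Concretely, one may try to verify in addition that $[y,[x,g(x)]]=0$ for all $y\in L$, so that $[x,g(x)]\in\Zz(L)=0$; alternatively, since replacing $g$ by $g+\mu$ for any linear map $\mu\colon L\to\Zz^l(L)$ does not change $f$, one may attempt to choose $\mu$ so that $[x,(g+\mu)(x)]=0$. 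Once $[x,g(x)]=0$ is secured, $g$ is commuting and $f(x,y)=[g(x),y]$, as desired.

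The step I expect to be the main obstacle is precisely this last one. In a Leibniz algebra the two conditions $[g(x),x]=0$ and $[x,g(x)]=0$ defining a commuting map are genuinely independent---they differ by an element of $\Leib(L)$---and the biderivation identities for $f(x,y)=[g(x),y]$ reduce to the Leibniz identity together with the relation $[g(x),y]=-[g(y),x]$, so by themselves they impose no constraint on $[x,g(x)]$. Hence the triviality of the center together with the inclusion $\Leib(L)\subseteq\Zz^l(L)$ must be exploited in an essential way to annihilate the residual term; making this rigorous is where the real work lies.
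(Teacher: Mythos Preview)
Your approach is precisely the paper's: invoke \Cref{thm:complete2} to get $\varphi,\psi$, use \Cref{prop:sym-skewsym}(b) to see $\Imm(\varphi+\psi)\subseteq\Zz^l(L)$, project onto a complement $W$ of $\Zz^l(L)$ via $\pi$, and set $g=\pi\varphi$. The paper's entire argument is the paragraph immediately preceding the two propositions, and it stops exactly where you pause --- at $f(x,y)=[g(x),y]$ with $\pi\varphi=-\pi\psi$ --- and then simply asserts the propositions.

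The issue you flag is real, and you have been more scrupulous than the paper. The paper's own definition of a commuting map requires both $[g(x),x]=0$ \emph{and} $[x,g(x)]=0$, yet the discussion preceding the proposition only yields the first of these (via $[g(x),y]=-[g(y),x]$ and $y=x$). The paper does not address the second condition at all; there is no argument there beyond what you have already written. So you have faithfully reproduced the paper's proof and, in addition, correctly identified a point the paper glosses over. Your suggested fixes (showing $[x,g(x)]\in\Zz(L)$, or adjusting $g$ by a map into $\Zz^l(L)$) are reasonable lines of attack, but the paper itself offers no help with them.
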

		
		\section*{Acknowledgments and Funding}
		
		Alfonso Di Bartolo was supported by the University of Palermo (FFR2024, UNIPA BsD).  Gianmarco La Rosa was supported by "Sustainability Decision Framework (SDF)" Research Project --  CUP B79J23000540005 -- Grant Assignment Decree No. 5486 adopted on 2023-08-04. The first and third authors were also supported by the “National Group for Algebraic and Geometric Structures, and Their Applications” (GNSAGA — INdAM).
		
		\section*{Conflict of Interest}
		
		The authors have no conflict of interest to declare that is relevant to this article.
		
		\section*{Data Availability}
		Data sharing is not applicable to this article as no datasets were generated or analysed during the current study.
		
		\section*{ORCID}
		
		Alfonso Di Bartolo \orcidlink{0000-0001-5619-2644} \href{https://orcid.org/0000-0001-5619-2644}{0000-0001-5619-2644}\\
		Francesco Di Fatta \orcidlink{0009-0008-6197-6126} \href{https://orcid.org/0009-0008-6197-6126}{0009-0008-6197-6126}\\
		Gianmarco La Rosa \orcidlink{0000-0003-1047-5993} \href{https://orcid.org/0000-0003-1047-5993}{0000-0003-1047-5993}
		
		\printbibliography
\end{document}